\newtheorem{theorem}{Theorem}[section]
\newtheorem{lemma}[theorem]{Lemma}
\newtheorem{definition}[theorem]{Definition}
\newtheorem{problem}[theorem]{Problem}
\newtheorem{proposition}[theorem]{Proposition}
\newtheorem{conjecture}[theorem]{Conjecture}
\newtheorem{remark}[theorem]{Remark}
\crefname{theorem}{Theorem}{Theorems}
\crefname{proposition}{Proposition}{Propositions}
\crefname{lemma}{Lemma}{Lemmas}
\theoremstyle{plain} 
\newcommand{\thistheoremname}{}
\newtheorem*{genericthm}{\thistheoremname}
\DeclareSymbolFont{bbold}{U}{bbold}{m}{n}
\DeclareSymbolFontAlphabet{\mathbbold}{bbold}
\newcommand{\ra}{\rangle}
\newcommand{\la}{\langle}
\def\moverlay{\mathpalette\mov@rlay}
\def\mov@rlay#1#2{\leavevmode\vtop{%
   \baselineskip\z@skip \lineskiplimit-\maxdimen
   \ialign{\hfil$\m@th#1##$\hfil\cr#2\crcr}}}
\newcommand{\charfusion}[3][\mathord]{
    #1{\ifx#1\mathop\vphantom{#2}\fi
        \mathpalette\mov@rlay{#2\cr#3}
      }
    \ifx#1\mathop\expandafter\displaylimits\fi}
\newcommand{\RR}{\mathbb{R}}
\newcommand{\QQ}{\mathbb{Q}}
\newcommand{\NN}{\mathbb{N}}
\newcommand{\PP}{\mathbb{P}}
\newcommand{\EE}{\mathbb{E}}
\newcommand{\tp}{\textit{\texttt{p}}}
\newcommand{\tq}{\textit{\texttt{q}}}
\newcommand{\one}{\bm{1}}
\newcommand{\sN}{\mathcal{N}}
\newcommand{\sX}{\mathcal{X}}
\newcommand{\Ex}{\mathop{\mathbb{E}}}
\newcommand{\Var}{\mathop{\text{Var}}}
\newcommand{\NP}{\mathsf{NP}}
\newcommand{\polylog}{\mathsf{polylog}}
\newcommand{\bP}{\bm P}
\newcommand{\RIP}{\mathsf{RIP}}
\newif\ifnotes
\definecolor{AfonsoBlue}{RGB}{30,65,123}
\renewcommand*{\@fnsymbol}[1]{\ensuremath{\ifcase#1\or *\or \ddagger\or
    \mathsection\or \mathparagraph\or \|\or **\or \dagger\dagger
    \or \ddagger\ddagger \else\@ctrerr\fi}}
\title{The Average-Case Time Complexity of Certifying the Restricted Isometry Property}
\author[1]{Yunzi Ding\thanks{Email: \textit{yding@nyu.edu}. Partially supported by NSF grant DMS-1712730.}}
\author[1]{Dmitriy Kunisky\thanks{Email: \textit{kunisky@cims.nyu.edu}. Partially supported by NSF grants DMS-1712730 and DMS-1719545.}}
\author[1]{Alexander S.\ Wein\thanks{Email: \textit{awein@cims.nyu.edu}. Partially supported by NSF grant DMS-1712730 and by the Simons Collaboration on Algorithms and Geometry.}}
\author[2]{Afonso S.\ Bandeira\thanks{Email: \textit{bandeira@math.ethz.ch}. Part of this work was done while ASB was with the Department of Mathematics at the Courant Institute of Mathematical Sciences, and the Center for Data Science, at New York University; and partially supported by NSF grants DMS-1712730 and DMS-1719545, and by a grant from the Sloan Foundation.}}
\affil[1]{Department of Mathematics, Courant Institute of Mathematical Sciences, New York University, USA}
\affil[2]{Department of Mathematics, ETH Zurich, Switzerland}
\date{}
\begin{document}

\maketitle

\begin{abstract}
In compressed sensing, the restricted isometry property (RIP) on $M \times N$ sensing matrices (where $M < N$) guarantees efficient reconstruction of sparse vectors. A matrix has the $(s,\delta)$-$\RIP$ property if behaves as a $\delta$-approximate isometry on $s$-sparse vectors. It is well known that an $M\times N$ matrix with i.i.d.\ $\sN(0,1/M)$ entries is $(s,\delta)$-$\RIP$ with high probability as long as $s\lesssim \delta^2 M/\log N$. On the other hand, most prior works aiming to deterministically construct $(s,\delta)$-$\RIP$ matrices have failed when $s \gg \sqrt{M}$. An alternative way to find an RIP matrix could be to draw a random gaussian matrix and certify that it is indeed RIP. However, there is evidence that this certification task is computationally hard when $s \gg \sqrt{M}$, both in the worst case and the average case.

In this paper, we investigate the exact average-case time complexity of certifying the RIP property for $M\times N$ matrices with i.i.d.\ $\sN(0,1/M)$ entries, in the ``possible but hard'' regime $\sqrt{M} \ll s\lesssim M/\log N$. Based on analysis of the low-degree likelihood ratio, we give rigorous evidence that subexponential runtime $N^{\tilde\Omega(s^2/M)}$ is required, demonstrating a smooth tradeoff between the maximum tolerated sparsity and the required computational power. This lower bound is essentially tight, matching the runtime of an existing algorithm due to Koiran and Zouzias~\cite{KZ-rip}. Our hardness result allows $\delta$ to take any constant value in $(0,1)$, which captures the relevant regime for compressed sensing. This improves upon the existing average-case hardness result of Wang, Berthet, and Plan \cite{WBP-avg-hard}, which is limited to $\delta = o(1)$.\\

\end{abstract}

\newpage

\newpage

\section{Introduction}
\subsection{Restricted Isometry Property}
For measuring and reconstructing high-dimensional sparse signals, the compressed sensing technique introduced by Cand\`{e}s and Tao \cite{CT-decoding} and Donoho \cite{donoho-comp-sensing} has demonstrated state-of-the-art efficiency and effectiveness in theory and practice. A central property on the sensing matrix, known as the \emph{restricted isometry property (RIP)} \cite{candes-rip}, requires that the matrix approximately preserves the norm of sparse vectors.
\begin{definition}
\label{def-rip}
A matrix $A\in\RR^{M\times N}$ is said to satisfy the $(s,\delta)$-$\RIP$ if
\[(1-\delta)\|x\|^2 \le \|Ax\|^2 \le (1+\delta)\|x\|^2 \]
 for any $x\in \RR^N$ with $\|x\|_0 \le s$. Here $\|\cdot\|$ denotes the vector $\ell^2$ norm, and $\|\cdot\|_0$ denotes the number of nonzero entries of a vector.
\end{definition}

\noindent In the context of compressed sensing, in pursuit of reducing the dimension, we are interested in the case $M < N$. An $(s,\delta)$-$\RIP$ sensing matrix with \mbox{$\delta < \sqrt{2}-1$} allows for efficient reconstruction of an $s/2$-sparse $N$-dimensional signal from $M$ linear measurements in the compressed sensing framework \cite{candes-rip,CZ-recovery,CZ-recovery2,foucart-cs}. From a practical point of view, given desired parameters $M,N,s,\delta$, one would like to construct an $(s,\delta)$-$\RIP$ matrix suitable for use as a sensing matrix. It appears to be very difficult to deterministically construct RIP matrices for $s \gg \sqrt{M}$, a phenomenon known as the ``square bottleneck'' \cite{mixon-explicit,BFMM-derandomizing,BMM-conditional,gamarnik-ramsey-hard}; in fact, the only known success is due to \cite{bourgain}, which constructed $(s,\delta)$-$\RIP$ matrices for
$s \ge M^{1/2 + \epsilon}$ for a small constant $\epsilon > 0$. Readers may refer to \cite{BFMW-deterministic} for more details.

Meanwhile, randomized algorithms have seen much success in breaking the ``square bottleneck'' \cite{CT-decoding,baraniuk-proof,foucart-cs}. For example, by simply sampling an $M\times N$ matrix with i.i.d.\ $\sN(0,1/M)$ entries, one obtains a $(s,\delta)$-$\RIP$ matrix with high probability (i.e., probability $1-o(1)$), so long as $s\lesssim \delta^2 M/\log N$. While such randomized algorithms generate RIP matrices with more desireable parameters $s,\delta$ than the known deterministic constructions, they suffer from a potential drawback: it is not guaranteed that the output is \emph{always} $(s,\delta)$-$\RIP$ as desired. This motivates the following task known as RIP certification, in which we discard non-RIP samples and confidently keep a sample only when it is indeed RIP.

\begin{problem}[RIP certification]
\label{rip-cert}
Given a matrix $A\in\RR^{M\times N}$, a positive integer $s$, and $\delta\in (0,1)$, output either ``yes'' or ``no'' according to the following rules. If $A$ is not $(s,\delta)$-$\RIP$, the output must always be ``no''. If $A$ has i.i.d.\ $\mathcal{N}(0,1/M)$ entries, the output must be ``yes" with high probability.
\end{problem}

\noindent Note that this allows false negative errors but not false positive errors, so that such a certifying procedure allows us to reliably obtain an RIP matrix by sampling random matrices until the certifier outputs ``yes''.

The \textit{worst-case} problem of deciding (with certainty) whether or not a given matrix is $(s,\delta)$-$\RIP$ is $\NP$-hard \cite{TP-complexity,BDMS-hard}, even if the input is guaranteed to either be RIP or far from RIP \cite{weed-approx-hard} (see also~\cite{KZ-rip,NW-sse} which require stronger assumptions than $\mathsf{P} \ne \NP$). All known algorithms for this worst-case task~\cite{devore-deterministic,applebaum-chirp,FMT-steiner-etf,BFMW-deterministic} require time $N^{\tilde\Omega(s)}$, which is the time required to enumerate all possible support sets $S \subseteq [N]$ of cardinality $s$.

For the \emph{average-case} RIP certification problem (Problem~\ref{rip-cert}), it is shown in \cite{WBP-avg-hard} that thresholding $\|A^\top A-I_N\|_{\infty}$ gives a polynomial-time certifier of $(s,\delta)$-$\RIP$ for i.i.d.\ sub-gaussian matrices in the regime $s\lesssim \delta\sqrt{M/\log N}$. Note, however, that this does not help surpass the ``square bottleneck''. In fact, the same work \cite{WBP-avg-hard} shows that for any $\epsilon > 0$, when $s \ge (\delta^2 M/\log N)^{1/2 + \epsilon}$, no polynomial-time certifier exists for the average-case task, conditional on an assumption about detecting dense subgraphs, which is a weaker assumption than the \textit{planted clique hypothesis} (i.e., their assumption is implied by the planted clique hypothesis). However, this result only shows hardness in the regime $\delta = o(1)$.

\subsection{Our Contributions}

In this paper, we further investigate the average-case hardness of Problem~\ref{rip-cert}. For any fixed $\delta\in(0,1)$, in the ``possible but hard'' regime $\sqrt{M} \ll s \lesssim M/\log N$, we give evidence that $(s, \delta)$-$\RIP$ certification requires time $N^{\tilde\Omega(s^2/M)}$ with an analysis of the \textit{low-degree likelihood ratio} (see Section~\ref{low-deg}). Our lower bound is optimal, as it matches the runtime of the \textit{lazy algorithm} proposed in~\cite{KZ-rip}. Here and throughout, the notation $\tilde \Omega$, $\tilde{O}$, and $\tilde{\Theta}$ hide factors of $\log N$, while $\lesssim$ hides a constant factor.

The strategy for our lower bound is to give a reduction to Problem \ref{rip-cert} from a certain hypothesis testing problem in the negatively-spiked Wishart model. Our arguments will be based on two distributions over $\RR^{M\times N}$, namely the distribution $\QQ$ of i.i.d.\ Gaussian matrices and a distribution $\PP$ over matrices which have a sparse vector planted in their null-space. We start with a proof that, in the regime $\sqrt{M} \ll s \lesssim M/\log N$, $A\sim \QQ$ is RIP with high probability while $A\sim \PP$ is non-RIP with high probability. We then argue that it is computationally hard to distinguish $\PP$ from $\QQ$, assuming the \emph{low-degree conjecture} (see Section~\ref{low-deg}); in other words, we prove that the class of \emph{low-degree polynomial algorithms} requires time $N^{\tilde\Omega(s^2/M)}$ to distinguish $\PP$ from $\QQ$. From this we infer the hardness of RIP certification.

For the matching upper bound, let us give a brief overview of the \textit{lazy algorithm} in~\cite{KZ-rip}. The main step of the algorithm involves exhaustive search over subsets of $[N]$ with cardinality $r\approx s^2/M$, which are interpreted as the possible supports of $r$-sparse vectors in $\mathbb{R}^N$. As $s$ ranges from $\sqrt{M}$ up to $M/\log N$, the runtime of the algorithm ($N^{\tilde O(s^2/M)}$) smoothly interpolates between polynomial ($N^{O(1)}$) and exponential ($N^{\tilde{O}(M)}$). Our lower bound suggests that this is in fact the \emph{optimal} tradeoff between runtime and sparsity.

Our main contribution in this paper is a precise understanding of the computational power needed for the average-case certification of $(s,\delta)$-$\RIP$, namely $N^{\tilde\Theta(s^2/M)}$. In contrast, the previous average-case hardness result of \cite{WBP-avg-hard} only suggests that at least $N^{\log(N)}$ time is required, since (like planted clique) the dense subgraph problem that they give a reduction from can be solved in time $N^{\log(N)}$. Another strength of our lower bound is that it applies for \emph{any} fixed $\delta \in (0,1)$, whereas the hardness result of~\cite{WBP-avg-hard} is restricted to $\delta = o(1)$. In other words, we are showing that even an \emph{easier} certification problem is hard. Since any fixed $\delta < \sqrt{2} - 1$ is sufficient for compressed sensing applications, our result is the first average-case lower bound to capture the entire regime of relevant $\delta$ values.

\subsection{Spiked Wishart Model and Hardness of RIP Certification}\label{sec:wishart}

Definition \ref{def-rip} tells us that a matrix is not $(s,\delta)$-$\RIP$ for any $\delta \in (0,1)$ if there exists an $s$-sparse vector in its kernel. The computational hardness of RIP certification in this paper is based on the following fact:\\

\noindent\textbf{Any certifier for $(s,\delta)$-$\RIP$ (a solution to Problem \ref{rip-cert}) can be used to distinguish a random matrix from any matrix that has an $s$-sparse vector in its kernel.}\\

\noindent Note that the operation on $A\in \RR^{M\times N}$ that projects each row of $A$ onto the subspace $\{x\}^\perp$ for some $s$-sparse vector $x\in\RR^N$ adds $x$ to the kernel of the resulting matrix, and hence deprives the matrix $A$ of its $(s,\delta)$-$\RIP$ property. Therefore, a certifier should be able to tell whether an $(s,\delta)$-$\RIP$ matrix $A$ has undergone this operation. We will make use of the classical spiked Wishart model, which captures the property of the row-wise projection.

\begin{definition}[Spiked Wishart model] 
\label{def-wish}
Let $\sX = (\sX_N)$ be a distribution over $\RR^N$, and let $\beta\in [-1,+\infty)$. We define two distributions over $\RR^{M\times N}$, where $A\in\RR^{M\times N}$ is taken as follows:
\begin{itemize}
    \item Under $\QQ = \QQ_{N,M}$, draw each row $u_i^\top$ ($i = 1,2,\dots,M$) of $A$ i.i.d.\ from $\sN(0,I)$.
    \item Under $\PP = \PP_{N,M,\beta,\sX}$, draw $x\sim \sX$. If $\beta \|x\|^2 \ge -1$, then draw each row $u_i^\top$ ($i = 1,2,\dots,M$) of $A$ i.i.d.\ from $\sN(0,I+\beta xx^\top)$; otherwise, draw each row $u_i^\top$ ($i = 1,2,\dots,M$) of $A$ i.i.d.\ from $\sN(0,I)$.
\end{itemize}
We call $\PP$ the planted model and $\QQ$ the null model. We define the spiked Wishart model as the two distributions taken together: $(\PP,\QQ) = \mathsf{Wishart}(N,M,\beta,\sX)$.
\end{definition}

\noindent
We will consider spike priors $\sX$ normalized so that $\|x\| \approx 1$, ensuring that $\beta \|x\|^2 \ge -1$ is a high-probability event under $\PP$. The condition $\beta \|x\|^2 \ge -1$ in the definition of $\PP$ ensures that the covariance matrix $I+\beta xx^\top$ is positive semidefinite.
Our case of interest will be $\beta < 0$, in which case $\PP$ can be viewed as the row-wise partial projection of $\QQ$ onto $\{x\}^\perp$, where $x$ is taken from the prior distribution $\sX$. In fact, for any $u \sim \sN(0,I+\beta xx^\top)$ where $\|x\| = 1$, we have 
\[\EE\la u,x\ra^2 = 1+\beta < 1 = \EE \la u,y\ra^2,\quad \text{for any unit-norm } y\perp x \]
which tells us that the rows of $A$ under $\PP$ have low correlation to the subspace $\{x\}$ but high correlation to the subspace $\{x\}^\perp$.

Since we are interested in the signal $x$ being sparse, we will take the spike prior $\sX$ to be the following sparse Rademacher prior.

\begin{definition}[Sparse Rademacher prior]
\label{sps-rad}
Given $\rho\in (0,1)$, the sparse Rademacher prior $\sX_N^\rho$ is the distribution over $\RR^N$ where for $x\sim \sX_N^\rho$ each entry $x_i$ is distributed independently as
\begin{equation*}
x_i = \left\{
\begin{aligned}
\frac{1}{\sqrt{\rho N}} \quad &\text{with probability }\frac{\rho}{2},\\
-\frac{1}{\sqrt{\rho N}} \quad &\text{with probability }\frac{\rho}{2},\\
0 \quad &\text{with probability }1-\rho.\\
\end{aligned}
\right.
\end{equation*}
\end{definition}

\noindent
Note that $x\sim \sX$ has $\EE \|x\|^2 = 1$ and $\EE \|x\|_0 = \rho N$. In order for $x$ to be $s$-sparse with high probability, we will take $\rho = s/(2N)$.

We now introduce the logic for demonstrating the average-case hardness of certifying RIP for gaussian random matrices. The following proposition serves as an outline of the proof of the lower bound. The informal assertions $(1),(2)$ and $(3)$ are made rigorous in Section \ref{sec:main-results}, in Lemma \ref{P-notrip}, Theorem \ref{Q-rip} and Theorem \ref{ldlr-bdd} respectively. 

\begin{proposition}[Informal] 
\label{pca-rip}
Fix a constant $\delta = (0,1)$ and let $\epsilon = \frac{1-\delta}{2(1+\delta)}$. Consider an asymptotic regime where $N \to \infty$ and where $M = M(N)$ and $s = s(N)$ scale with $N$ such that $M \le N$, $M \to \infty$ and $s \to \infty$.
Consider the sequence of spiked Wishart models with sparse Rademacher prior $\{\mathsf{Wishart}(N,M,-(1-\epsilon),\sX^\rho_N)\}_{N\in \mathbb{Z}^+}$ where $\rho = s/(2N)$. The following three properties hold in the limit $N\rightarrow\infty$.
\begin{itemize}
    \item[(1)] For $A\sim \PP$, $\frac{1}{\sqrt{M}}A$ is not $(s,\delta)$-$\RIP$ with high probability.
    \item[(2)] For $A\sim \QQ$, $\frac{1}{\sqrt{M}}A$ is $(s,\delta)$-$\RIP$ with high probability, provided $s \lesssim M/\log N$.
    \item[(3)] Conditional on the low-degree conjecture (see Section~\ref{low-deg}), any algorithm to distinguish $\PP$ from $\QQ$ with error probability $o(1)$ requires time $N^{\tilde\Omega(s^2/M)}$.
\end{itemize}
\end{proposition}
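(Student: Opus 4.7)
The proposition bundles three essentially independent statements, which I would prove in parallel and then combine. Parts (1) and (2) are concentration-type statements about the two marginal laws of $A$, while Part (3) is the technical heart of the paper and invokes the low-degree likelihood ratio framework.

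\textbf{Part (1).} Sample $x \sim \sX_N^\rho$ with $\rho = s/(2N)$. By a Chernoff bound for $\|x\|_0 \sim \mathrm{Binomial}(N,\rho)$, we get $\|x\|_0 \le s$ with probability $1 - e^{-\Omega(s)}$, and since each nonzero entry has modulus $1/\sqrt{\rho N}$, we have $\|x\|^2 = \|x\|_0/(\rho N) \to 1$ in probability. In particular $\beta\|x\|^2 \to -(1-\epsilon) > -1$, so with high probability the planted model draws from $\sN(0, I + \beta xx^\top)$. Conditional on $x$, the normalized inner product $\langle u_i, x\rangle/\|x\|$ is $\sN(0, 1+\beta\|x\|^2)$, so $\|Ax\|^2/\|x\|^2 = (1+\beta\|x\|^2) Z$ with $Z \sim \chi^2_M$. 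Standard $\chi^2$ concentration gives $\|(1/\sqrt M)Ax\|^2/\|x\|^2 \to 1+\beta = \epsilon$. Since $\epsilon = (1-\delta)/(2(1+\delta)) < 1-\delta$, the ($s$-sparse) planted vector $x$ itself witnesses the failure of the $(s,\delta)$-$\RIP$.

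\textbf{Part (2).} This is the classical Gaussian RIP bound. Enumerate the $\binom{N}{s} \le (eN/s)^s$ possible supports, and inside each support build a $\nu$-net of its unit sphere of size $(3/\nu)^s$. For each net point $y$, $\|(1/\sqrt M)Ay\|^2$ is distributed as $(1/M)\chi^2_M$, and Gaussian concentration yields $\PP\!\left[\,\bigl|\|(1/\sqrt M)Ay\|^2 - 1\bigr| > \delta/2\,\right] \le 2\exp(-c\delta^2 M)$. A union bound with $\nu$ a small constant requires $s\log(eN/s) \lesssim \delta^2 M$, which at fixed $\delta$ is implied by the assumption $s \lesssim M/\log N$. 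A standard net-to-sphere argument then promotes the bound on the net to all $s$-sparse unit vectors.

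\textbf{Part (3).} This is the main technical step and the only real obstacle. The plan is to bound $\|L^{\le D}\|_{L^2(\QQ)}^2$, the norm of the degree-$D$ projection of the likelihood ratio $d\PP/d\QQ$, for $D$ up to $\tilde\Omega(s^2/M)$. For spiked Wishart models there is a by-now standard formula, obtained by integrating the Wishart density against Hermite polynomials, that expresses $\|L^{\le D}\|^2$ as a sum over $d \le D$ of moments of the form $\EE_{x, x'\sim \sX_N^\rho}\!\left[f_d(\beta M \langle x, x'\rangle^2)\right]$ for explicit polynomials $f_d$. For the sparse Rademacher prior, $\langle x, x'\rangle$ is a centered sum of independent terms with variance $1/N$, and its higher moments can be controlled by a combinatorial expansion that counts the joint support patterns of $x$ and $x'$. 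The main difficulty is carrying out this moment bookkeeping precisely enough to show $\|L^{\le D}\|^2 = O(1)$ up to $D = \tilde\Omega(s^2/M)$, especially in the large-$d$ regime where naive Gaussian-like bounds are inadequate in the sparse regime. Once this is in hand, the low-degree conjecture (Section~\ref{low-deg}) converts degree-$D$ indistinguishability into the claimed $N^{\tilde\Omega(s^2/M)}$ runtime lower bound for any distinguisher between $\PP$ and $\QQ$, which combined with (1) and (2) gives the hardness of RIP certification via the reduction outlined in Section~\ref{sec:wishart}.
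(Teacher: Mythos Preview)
Your proposal is correct and follows essentially the same route as the paper: Part~(1) matches the proof of Lemma~\ref{P-notrip} (Chernoff on $\|x\|_0$, then $\chi^2$-concentration on $\|Ax\|^2$ with the same arithmetic check $\epsilon<1-\delta$), Part~(2) is the standard net/union-bound argument behind Theorem~\ref{Q-rip}, and Part~(3) is precisely the paper's strategy in Theorem~\ref{ldlr-bdd} of plugging into the Wishart low-degree formula from~\cite{BKW-sk} and bounding the moments $\EE\langle x,x'\rangle^{2d}$ via the joint-support decomposition. The only minor inaccuracy is the shape of the formula in Part~(3): the $M$-dependence enters through the Taylor coefficients of $(1-4t)^{-M/2}$ rather than as an argument $\beta M\langle x,x'\rangle^2$, but this does not affect the outline.
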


\noindent
In light of assertions (1) and (2), any certifier for $(s,\delta)$-$\RIP$ (a solution to Problem~\ref{rip-cert}) can be used to distinguish $\PP$ from $\QQ$ with error probability $o(1)$. Thus (3) implies that, conditional on the low-degree conjecture, any algorithm to solve Problem~\ref{rip-cert} requires time $N^{\tilde\Omega(s^2/M)}$.

The problem of distinguishing two sequences of distributions falls into the setting of hypothesis testing. In the following Section \ref{low-deg}, we discuss a general method to predict computational hardness of such tasks.

\subsection{The Low-Degree Likelihood Ratio}
\label{low-deg}

The so-called \textit{low-degree method} for predicting the amount of computational power required for hypothesis testing tasks has seen fruitful development in the recent years, after its origination from studies on the sum-of-squares (SoS) hierarchy \cite{BHK-planted-clique,HS-bayesian,HKP-sos,hopkins-thesis}. This method has proven successful in understanding many classical statistical tasks, including community detection \cite{HS-bayesian,hopkins-thesis}, planted clique \cite{BHK-planted-clique,hopkins-thesis}, PCA and sparse PCA in spiked matrix models \cite{BKW-sk,KWB-notes,DKWB-spspca}, and tensor PCA \cite{HKP-sos,hopkins-thesis,KWB-notes}. Here we give a brief overview of this method; the reader may find more details in \cite{HS-bayesian,hopkins-thesis} or in the survey article \cite{KWB-notes}.

This method applies to hypothesis testing problems in which we aim to distinguish two sequences of hypotheses $\{\PP_N\}$ and $\{\QQ_N\}$, where $\PP_N$ and $\QQ_N$ are probability distributions on $\Omega_N = \RR^{d(N)}$ with $d(N) = N^{O(1)}$. Usually $\QQ_N$ is referred to as the ``null'' distribution (which contains pure noise), and $\PP_N$ is referred to as the ``planted'' distribution (which contains a planted structure). We are interested in the problem of \emph{strongly distinguishing} $\PP$ from $\QQ$ in a computationally-efficient manner, that is, we are interested in the minimal runtime needed for an algorithm that takes as input a sample from either $\PP$ or $\QQ$ and correctly identifies which of the two distributions it was drawn from, with both type I and type II errors $o(1)$ as $N \to \infty$.

The central idea of the low-degree method is to study a restricted class of algorithms, namely degree-$D$ multivariate polynomials $f: \Omega_N \to \RR$, for a particular choice of $D = D(N)$ discussed later. We say that such a polynomial succeeds at distinguishing $\PP$ from $\QQ$ provided that
\begin{equation}\label{eq:f-success}
\Ex_{Y \sim \QQ_N}[f(Y)] = 0, ~ \Ex_{Y \sim \PP_N}[f(Y)] = 1, ~ \Var_{Y \sim \QQ_N}[f(Y)] = o(1), ~ \Var_{Y \sim \PP_N}[f(Y)] = o(1)
\end{equation}
as $N \to \infty$, in which case $\PP$ and $\QQ$ can be strongly distinguished by thresholding $f(Y)$. To prove failure of \emph{all} degree-$D$ polynomials, we will be interested in computing the quantity
\[ \|L_N^{\le D}\| := \max_{f \in \RR[Y]_{\le D}} \frac{\EE_{Y \sim \PP_N}[f(Y)]}{\sqrt{\EE_{Y \sim \QQ_N}[f(Y)^2]}} \]
where $\RR[Y]_{\le D}$ is the space of polynomials $f:\Omega_N \to \RR$ of degree (at most) $D$. Note that if $\|L_N^{\le D}\| = O(1)$ as $N \to \infty$ then no degree-$D$ polynomial can succeed in the sense of~\eqref{eq:f-success}. The notation $\|L_N^{\le D}\|$ stems from the fact that this quantity is the $L^2(\QQ_N)$-norm of the \emph{low-degree likelihood ratio} $L_N^{\le D}$, which is the orthogonal projection of the likelihood ratio $L_N = d\PP_N/d\QQ_N$ onto the subspace of degree-$D$ polynomials; see e.g.\ \cite{hopkins-thesis,KWB-notes}.

Recent work has revealed that low-degree polynomials appear to be a good proxy for the inherent computational tractability of many high-dimensional testing problems. For many classical problems---including planted clique, sparse PCA, community detection, tensor PCA, and more---it has been shown that polynomials of degree $D = O(\log N)$ are as powerful as the best known polynomial-time algorithms. In other words, for all of these problems, in the parameter regime where a poly-time algorithm is known we have $\|L_N^{\le O(\log N)}\| \to \infty$ whereas in the conjectured ``hard'' regime (where no poly-time algorithm is known) we have $\|L_N^{\le \omega(\log N)}\| = O(1)$ \cite{HS-bayesian,HKP-sos,hopkins-thesis,KWB-notes,DKWB-spspca}. This method even captures sharp computational thresholds such as the Kesten--Stigum bound in the stochastic block model~\cite{HS-bayesian,hopkins-thesis}. One explanation for this phenomenon is that the best known poly-time algorithms for these types of problems typically take the form of spectral methods (i.e.\ thresholding the leading eigenvalue of some matrix built from the observed data), and any such spectral method can be implemented as an $O(\log N)$-degree polynomial via power iteration (under mild conditions); see Theorem~4.4 of~\cite{KWB-notes} for the precise sense in which boundedness of $\|L_N^{\le D}\|$ imply failure of all spectral methods. Boundedness of $\|L_N^{\le D}\|$ also implies failure of \emph{statistical query algorithms}; see~\cite{ld-sq}. For larger degree $D \gg \log N$ it has also been observed that degree-$D$ polynomials are as powerful as the best known algorithms of runtime $N^{\tilde\Theta(D)}$ (which is the runtime needed to evaluate a degree-$D$ polynomial) in settings such as tensor PCA~\cite{KWB-notes} and sparse PCA~\cite{DKWB-spspca} (both of which exhibit a smooth tradeoff between signal strength and subexponential runtime). The above ideas are summarized by the following informal \emph{low-degree conjecture} based on \cite{HS-bayesian,HKP-sos,hopkins-thesis}.

\begin{conjecture}[Informal]
    \label{conj:low-deg-informal}
    Let $t: \NN \to \NN$.
    For ``natural'' high-dimensional testing problems specified by $\PP_N$ and $\QQ_N$, if $\|L_N^{\leq D(N)}\|$ remains bounded as $N \to \infty$ whenever $D(N) \le t(N) \cdot \polylog(N)$, then there exists no sequence of functions $f_N: \Omega_N \to \{\tp,\tq\}$ with $f_N$ computable in time $N^{O(t(N))}$ that strongly distinguishes $\PP_N$ and $\QQ_N$, i.e., that satisfies
    \begin{equation}
        \lim_{N \to \infty} \QQ_N\left[ f_N(Y) = \tq \right] = \lim_{N \to \infty} \PP_N\left[ f_N(Y) = \tp \right] = 1.
    \end{equation}
\end{conjecture}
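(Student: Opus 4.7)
Since the final statement is an \emph{informal} conjecture rather than a theorem, no literal proof exists; in full generality the claim is in fact false, since one can manufacture distinguishing problems (from pseudorandom generators, trapdoor functions, etc.) for which low-degree polynomials fail while polynomial-time algorithms succeed. The role of the word ``natural'' is precisely to exclude such contrived constructions, and any plan must either formalize that word or accept that the statement is a meta-principle supported by rigorous partial results plus a body of matching examples. My plan is to describe how one would assemble the strongest feasible such support.

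First I would restrict attention to sub-classes of testing problems where a rigorous version of the implication can be established. Natural candidates are additive Gaussian-noise models, spiked matrix and tensor ensembles (including the sparse spiked Wishart model of Section~\ref{sec:wishart}), planted constraint satisfaction problems, and stochastic block models, all of which admit a high-transitivity symmetry group acting on $\Omega_N$. Within these classes one already has a clean rigorous fragment of the conjecture: by Theorem~4.4 of~\cite{KWB-notes}, boundedness of $\|L_N^{\le D}\|$ as $N \to \infty$ rules out every spectral method implementable by $\tilde O(D)$ rounds of power iteration on a matrix polynomial in the data, and by~\cite{ld-sq} it rules out statistical-query algorithms with a matching SQ dimension. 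The first step would be to package these as genuine Turing-machine lower bounds against the spectral and SQ subclasses, which already subsume essentially every known polynomial-time algorithm for planted clique, community detection, sparse PCA, and tensor PCA.

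Next I would extend the degree-runtime correspondence beyond spectral and SQ methods. For each of belief propagation, approximate message passing, local improvement heuristics, and constant-level SoS relaxations, one checks that the algorithm's output, on inputs of size $N$ and runtime $N^{O(t(N))}$, can be simulated by a polynomial of degree $t(N)\cdot\polylog(N)$ in the input coordinates. Combined with the empirical alignment between low-degree thresholds and the best known algorithmic thresholds on a catalog of problems---polynomial-time regimes captured at $D=O(\log N)$ as in \cite{HS-bayesian,HKP-sos,hopkins-thesis,KWB-notes}, the Kesten--Stigum threshold for the stochastic block model~\cite{HS-bayesian,hopkins-thesis}, and the smooth subexponential tradeoffs for tensor and sparse PCA~\cite{KWB-notes,DKWB-spspca}---this provides a substantial body of evidence that $\|L_N^{\le t(N)\cdot\polylog(N)}\| = O(1)$ is a reliable predictor of the failure of all time-$N^{O(t(N))}$ algorithms on problems in the restricted class.

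The main obstacle, which I would not expect to overcome, is formalizing ``natural'' in a way that retains the conclusion. Any attempt to strip the hedge and state a universal implication runs directly into cryptographic counterexamples: one can design $\PP_N$ and $\QQ_N$ that agree to all polylogarithmic moments (forcing $\|L_N^{\le D}\| = O(1)$ for $D = \polylog(N)$) yet are efficiently distinguishable via the structure of a trapdoor function. Producing a definition of ``natural'' that excludes such instances while still including the sparse-Rademacher spiked Wishart problem used in Proposition~\ref{pca-rip} appears to require progress well beyond current techniques in average-case complexity; this is why the statement is recorded as an informal conjecture and is invoked in Proposition~\ref{pca-rip} as an assumption rather than a theorem.
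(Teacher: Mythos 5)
You correctly identify that this statement is an informal conjecture, not a theorem, and that the paper neither proves it nor could with current techniques; the paper itself remarks that a proof ``appears to be currently out of reach, as this would imply $P\neq NP$.'' Your discussion of the supporting evidence (spectral methods via Theorem~4.4 of~\cite{KWB-notes}, statistical-query lower bounds via~\cite{ld-sq}, and the catalog of problems where low-degree thresholds match the best known algorithms) and of the cryptographic counterexamples that necessitate the hedge ``natural'' mirrors the paper's own discussion in Section~\ref{low-deg}, so your response is substantively the same as the paper's treatment of this statement.
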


\noindent This conjecture is informal because we have not attempted to specify the precise class of testing problems for which this is believed to hold. One formal variant of this conjecture is given by Conjecture~2.2.4 of~\cite{hopkins-thesis}, although it does not quite apply to the distributions that we consider in this paper. Still, we expect that Conjecture~\ref{conj:low-deg-informal} holds for our particular choice of $\PP$ and $\QQ$ (defined in Section~\ref{sec:wishart}), as these distributions are very similar in spirit to many classical settings where the conjecture is known to coincide with widely believed computational thresholds (e.g.\ spiked matrix models~\cite{BKW-sk,KWB-notes} and sparse PCA~\cite{DKWB-spspca}). We note that proving Conjecture~\ref{conj:low-deg-informal} appears to be currently out of reach, as this would imply $P\neq NP$. Furthermore, current techniques are not able to prove computational hardness of average-case problems, even under the assumption $P \neq NP$. Instead, one should think of bounds on $\|L^{\le D}\|$ as unconditional lower bounds against a restricted class of algorithms (namely low-degree polynomials). Conjecture~\ref{conj:low-deg-informal} captures the idea that low-degree algorithms are optimal among all known algorithms for a large (and growing) number of problems of this flavor.

In this paper, we give bounds on $\|L^{\le D}\|$ for our specific distributions $\PP$ and $\QQ$ defined in Section~\ref{sec:wishart}. Assuming Conjecture~\ref{conj:low-deg-informal}, this implies that runtime $N^{\tilde\Omega(s^2/M)}$ is required to distinguish them. Alternatively, the reader may choose to think of our lower bounds as \emph{unconditional} lower bounds against low-degree algorithms. This suggests that beating the runtime $N^{\tilde\Omega(s^2/M)}$ would require a drastically new algorithmic technique which would likely lead to breakthroughs in other problems as well.

\paragraph{Organization.}
The remainder of the paper is organized as follows.
In Section~\ref{sec:main-results}, we state our lower bound for average-case RIP certification based on the low-degree likelihood ratio, and show that the lower bound is optimal, matching the upper bound due to~\cite{KZ-rip}. In Section~\ref{sec:proofs-ldlr}, we give the proof for the lower bound.

\paragraph{Notation.}
Our standard asymptotic notation $O(\cdot)$, $\Omega(\cdot)$, $\Theta(\cdot)$ always pertains to the limit $N \to \infty$. We use $\polylog(N)$ to mean $(\log N)^{O(1)}$. We also use $\tilde{O}(B)$ to mean $O(B \cdot \polylog(N))$, $\tilde{\Omega}(B)$ to mean both $\Omega(B / \polylog(N))$, and $\tilde{\Theta}(B)$ to mean $\tilde{O}(B)$ and $\tilde{\Omega}(B)$. Also recall that $f(N) = o(g(N))$ (or equivalently $f(N) \ll g(N)$) means $f(N)/g(N) \to 0$ as $N \to \infty$ and $f(N) = \omega(g(N))$ (or equivalently $f(N)\gg g(N)$) means $f(N)/g(N) \to \infty$ as $N \to \infty$. We write $A \lesssim B$ to mean $A \le CB$ for an absolute constant $C$, and $A \gtrsim B$ to mean $A \ge CB$ for an absolute constant $C$. We say that an event occurs \textit{with high probability} if it occurs with probability $1-o(1)$ as $N\rightarrow\infty$.

\section{Main Results}
\label{sec:main-results}

We follow the proof sketch outlined in Proposition~\ref{pca-rip}. Adopting the framework of Proposition \ref{pca-rip}, we consider $\delta \in (0,1)$ held fixed as $N \to \infty$, with $M = M(N)$ and $s = s(N)$ scaling with $N$ such that $M \le N$, $M \to \infty$ and $s \to \infty$.

\begin{lemma}
\label{P-notrip}
Consider the setting of Proposition \ref{pca-rip}. Under $\PP$,
\begin{equation}
   \Pr\left[ \frac{1}{\sqrt{M}}A\ \text{is}\ (s,\delta)\text{-}\RIP\right]
    \le \exp\left(-\frac{\delta^2 M}{12}\right) + 2\exp\left(-\frac{(1-\delta)^2 s}{24}\right) = o(1).
\end{equation}
\end{lemma}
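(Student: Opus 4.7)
The plan is to show that the planted sparse vector $x$ itself serves as a witness to the failure of RIP with overwhelming probability. Concretely, it suffices to prove that with high probability under $\PP$, both (i) $\|x\|_0 \le s$ and (ii) $\|Ax\|^2 < (1-\delta) M \|x\|^2$ hold; these conditions together imply that $y = x$ witnesses violation of the lower inequality in the definition of $(s,\delta)$-$\RIP$ applied to $\frac{1}{\sqrt{M}}A$, hence RIP fails.

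The key structural computation is the following: conditional on $x$ with $\beta\|x\|^2 \ge -1$, each row $u_i^\top$ of $A$ is drawn i.i.d.\ from $\sN(0, I+\beta xx^\top)$, so the scalars $\la u_i, x\ra$ are i.i.d.\ $\sN(0, \|x\|^2(1+\beta\|x\|^2))$ across $i$. Hence
\[\|Ax\|^2 = \|x\|^2 (1+\beta\|x\|^2)\, Z, \qquad Z \sim \chi^2_M,\ Z \perp x.\]
Writing $W := \|x\|^2$, condition (ii) reduces to $(1+\beta W)(Z/M) < 1-\delta$. Since $\beta = -(1-\epsilon)$ with $\epsilon = \frac{1-\delta}{2(1+\delta)}$, when $W \approx 1$ the left-hand side is approximately $\epsilon$, smaller than $1-\delta$ by a factor of $2(1+\delta)$, so substantial slack is available.

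To make this rigorous I would introduce two concentration events and union-bound. For the norm $W$: since $\|x\|_0 \sim \mathrm{Binomial}(N, s/(2N))$ has mean $s/2$ and $W = 2\|x\|_0/s$, the two-sided Chernoff bound yields $\Pr[|W-1| > \eta] \le 2\exp(-\eta^2 s/6)$, so choosing $\eta = (1-\delta)/2$ produces the claimed term $2\exp(-(1-\delta)^2 s/24)$. For $Z$: the standard $\chi^2_M$ tail bound gives $\Pr[Z/M > 1+\mu] \le \exp(-c\mu^2 M)$ for appropriate $c$, and a suitable choice of $\mu$ proportional to $\delta$ produces the claimed term $\exp(-\delta^2 M/12)$. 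On the intersection of the complementary events one has $(1+\beta W)(Z/M) \le (\epsilon + (1-\epsilon)\eta)(1+\mu)$, which a short algebraic check (using the explicit value of $\epsilon$) shows to be strictly less than $1-\delta$. Moreover, $W \le 1+\eta < 2$ automatically enforces $\|x\|_0 \le s$, and $\eta$ small enough guarantees $\beta W \ge -1$, so the structural formula applies and the planted branch (rather than the fallback null branch) of the definition of $\PP$ is indeed active.

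The main obstacle I anticipate is purely algebraic: tuning $\eta$ and $\mu$ so that both the target exponents $(1-\delta)^2/24$ and $\delta^2/12$ are matched while the strict inequality $(\epsilon + (1-\epsilon)\eta)(1+\mu) < 1-\delta$ holds uniformly over $\delta \in (0,1)$; this amounts to careful bookkeeping with the explicit form of $\epsilon$. Apart from this, the argument is conceptually straightforward.
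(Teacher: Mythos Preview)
Your approach is essentially identical to the paper's: use the planted vector $x$ itself as the RIP-violating witness, write $\|Ax\|^2 = W(1+\beta W)\,Z$ with $W=\|x\|^2$ and $Z\sim\chi^2_M$ independent, and union-bound over a Bernoulli Chernoff event for $W$ and a $\chi^2$ tail for $Z$. The only difference is the tuning of the concentration window: the paper takes $\eta=\epsilon$ (rather than your tentative $\eta=(1-\delta)/2$), which makes both the planted-branch condition $\beta W\ge -1$ and the key inequality $\frac{1-\delta}{1-(1-\epsilon)W}\ge \frac{1-\delta}{2\epsilon}=1+\delta$ hold cleanly for every $\delta\in(0,1)$, at the cost of a harmless extra factor $(1+\delta)^2$ in the Bernoulli exponent.
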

\noindent
We defer the proof of Lemma \ref{P-notrip} to Section~\ref{sec:proofs-ldlr}. The following well-known result tells us that the random ensemble is RIP with high probability under $\QQ$.

\begin{theorem}
[See \cite{WBP-avg-hard}, Proposition 1; also \cite{CT-decoding,baraniuk-proof,foucart-cs}]
\label{Q-rip}
In the setting of Proposition \ref{pca-rip}, under $\QQ$, \begin{equation}
   \Pr\left[ \frac{1}{\sqrt{M}}A\ \text{is not}\ (s,\delta)\text{-}\RIP\right] 
   \le 2\exp\left[s\log\left(\frac{9eN}{s}\right)-\frac{\delta^2 M}{256}\right],
\end{equation}
which is $o(1)$ provided $s \lesssim M/\log N$.
\end{theorem}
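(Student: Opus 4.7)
The plan is to follow the standard covering argument for Gaussian RIP, going back to Baraniuk--Davenport--DeVore--Wakin. Observe first that $\frac{1}{\sqrt M}A$ is $(s,\delta)$-$\RIP$ if and only if for every index set $S \subseteq [N]$ with $|S|=s$ and every unit vector $x \in \RR^s$, the submatrix $A_S \in \RR^{M\times s}$ (the columns of $A$ indexed by $S$) satisfies $\frac{1}{M}\|A_S x\|^2 \in [1-\delta,1+\delta]$. Since $A$ has i.i.d.\ $\sN(0,1)$ entries, by rotational invariance the rows of $A_S$ are i.i.d.\ $\sN(0,I_s)$; hence for any fixed unit $x$, the quantity $\|A_S x\|^2$ is distributed as $\chi^2_M$.

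Next I would apply standard $\chi^2$ concentration (Laurent--Massart or a direct Chernoff bound on sums of squares of Gaussians) to get, for a single fixed unit $x$ and a fixed $S$,
\[
\Pr\!\left[\Big|\tfrac{1}{M}\|A_S x\|^2 - 1\Big| > \tfrac{\delta}{2}\right] \;\le\; 2\exp\!\left(-c\,M\delta^2\right)
\]
for an absolute constant $c$. To lift this from a single vector to the unit sphere of $\RR^s$, I would build a $\tfrac14$-net $\sN$ of the unit sphere of $\RR^s$ of cardinality at most $9^s$ (via the standard volumetric bound $(1+2/\epsilon)^s$), union bound the concentration estimate over $\sN$, and then upgrade from the net to the sphere using the usual perturbation lemma: if the normalized quadratic form $y \mapsto \frac{1}{M}\|A_S y\|^2$ deviates from $\|y\|^2$ by at most $\tfrac{\delta}{2}$ on $\sN$, then it deviates by at most $\delta$ on the whole unit sphere.

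Finally I would union bound over the $\binom{N}{s} \le (eN/s)^s$ choices of support $S$. Combining gives a failure probability at most
\[
\binom{N}{s} \cdot 9^s \cdot 2\exp(-c M\delta^2) \;\le\; 2\exp\!\left[s\log\!\left(\tfrac{9eN}{s}\right) - c M \delta^2\right],
\]
which matches the claimed form; the right-hand side is $o(1)$ whenever $s\log(N/s) \ll M\delta^2$, i.e., in particular when $s \lesssim M/\log N$ for fixed $\delta$.

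The main obstacle is purely bookkeeping of constants: one must choose the net parameter and the slack in the per-vector concentration consistently so that the final constant in front of $\delta^2 M$ comes out to $\tfrac{1}{256}$. There is no conceptual difficulty beyond the three ingredients above (Gaussian $\chi^2$ concentration, a volumetric net of the sphere, and a union bound over supports), which is why the result is folklore and typically attributed to Baraniuk et al.\ or Candes--Tao. One minor subtlety worth checking is that the net-to-sphere step uses the operator norm of $\frac{1}{M}A_S^\top A_S - I_s$, which requires a symmetric (two-sided) perturbation argument rather than a one-sided one; this is handled by the standard trick of writing the quadratic form as a polarization of two net evaluations.
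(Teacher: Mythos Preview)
Your proposal is correct and is exactly the standard Baraniuk--Davenport--DeVore--Wakin covering argument the paper defers to; the paper itself does not give a proof of this statement but simply cites \cite{WBP-avg-hard,CT-decoding,baraniuk-proof,foucart-cs}. So there is nothing to compare against, and your outline matches those references.
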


\noindent Based on Conjecture \ref{conj:low-deg-informal}, we are left to show the boundedness of the low-degree likelihood ratio between $\PP$ and $\QQ$, which gives evidence for hardness of the certification task. This is demonstrated in the following theorem.
\begin{theorem}
\label{ldlr-bdd}
In the setting of Proposition \ref{pca-rip}, suppose $M = M(N)$, $s = s(N)$, and $D = D(N)$ satisfy $s \le M$ and $D = o(s^2/M)$. Then the low-degree likelihood ratio $\|L^{\le D}\|$ of $\PP$ against $\QQ$ remains bounded as $N\rightarrow\infty$.
\end{theorem}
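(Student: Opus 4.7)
The plan is to follow the low-degree framework for spiked Wishart models developed in \cite{BKW-sk,KWB-notes,DKWB-spspca}. The first step is to derive a tractable expression for $\|L^{\le D}\|^2$. Because $A \mid x$ has i.i.d.\ rows drawn from $\sN(0, I+\beta xx^\top)$, the likelihood ratio factorizes across rows, and a Hermite expansion (tracking how many Hermite modes on each of the $M$ rows are excited while the total degree budget stays $\le D$) yields an identity of the form
\[
\|L^{\le D}\|^2 \;=\; \Ex_{x, x' \sim \sX}\!\left[\sum_{d=0}^{\lfloor D/2 \rfloor} c_{d,M}\, \beta^{2d}\, \la x, x'\ra^{2d}\right],
\]
where the $c_{d,M}$ arise from the Taylor coefficients of the full Wishart likelihood $(1-\beta^2\la x,x'\ra^2)^{-M/2}$ (appropriately truncated) and satisfy $c_{d,M} \lesssim (CM/d)^d$ in the regime of interest. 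This step is standard and parallels the computations in \cite{BKW-sk,DKWB-spspca}; adapting them to $\beta<0$ changes no essentials.

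The second step, and the real content of the theorem, is to bound $\Ex[\la x, x'\ra^{2d}]$ for independent $x, x' \sim \sX^\rho_N$ with $\rho = s/(2N)$. Expanding $\la x, x'\ra^{2d}$ as a sum over multi-indices and using coordinatewise independence, only multi-indices in which every coordinate appears with even multiplicity contribute. I would organize this sum by partitions $\pi$ of $[2d]$ into $r$ blocks of even sizes $2k_1,\ldots,2k_r$ with $\sum_j k_j = d$. Using $\Ex\!\left[(x_i x'_i)^{2k}\right] = \rho^2 (\rho N)^{-2k}$ together with $\rho N = s/2$, and bounding the number of index assignments by $N^r$, the $r$-block contribution is roughly
\[
\sum_{\pi\colon r\ \text{blocks}} N^r \prod_j \rho^2 (\rho N)^{-2k_j} \;\le\; \frac{(2d)^{2d}}{r!}\cdot \frac{4^{d-r}}{s^{2(d-r)}\, N^r}.
\]
Combining this with $c_{d,M}\beta^{2d} \lesssim (CM/d)^d$ and carefully summing over $r$ and $d$, I expect to collapse the combinatorics into
\[
\|L^{\le D}\|^2 \;\lesssim\; \sum_{d \le D/2}\left(\frac{C'\,M d}{s^2}\right)^{d},
\]
a geometric-style series that remains $O(1)$ exactly when $MD/s^2 \to 0$, i.e.\ $D = o(s^2/M)$, as claimed.

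The main obstacle will be the second step: one must show that neither the ``perfect pairing'' regime ($r = d$, analogous to Gaussian spikes, contributing like $(M/N)^d$) nor any ``clumped'' intermediate regime ($r < d$, where several expansion indices collapse onto a common coordinate and exploit the heavy tails $\Ex x_i^{2k} \gtrsim \rho/(\rho N)^k$, growing in $k$) blows up past the declared threshold. The sparsity ratio $\rho^2 N = s^2/(4N)\le 1$ together with the hypothesis $s \le M$ should provide precisely the slack needed to make the sum over $r$ telescope; identifying the worst-case $r$ and verifying that both endpoint and interior contributions obey the $(C'Md/s^2)^d$ bound is where the real calculation sits. A minor additional technicality is the conditioning event $\beta\|x\|^2 \ge -1$ in the definition of $\PP$; since $\|x\|^2$ concentrates at $1$ and $\beta = -(1-\epsilon)$ is bounded away from $-1$, this event has probability $1-o(1)$ and does not affect the LDLR bound at leading order, but it should be addressed explicitly (either by noting that $\PP$ agrees with the ``ideal'' planted distribution with high probability or by directly absorbing the discrepancy).
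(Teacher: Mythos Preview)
Your high-level strategy coincides with the paper's: invoke the Wishart LDLR identity from \cite{BKW-sk} to reduce to $\sum_{d\le D/2} c_{d,M}\beta^{2d}\,\Ex\la x,x'\ra^{2d}$, then control the even moments of $\la x,x'\ra$ for the sparse Rademacher prior. The paper bounds these moments differently from you: it conditions on the supports $S,S'$, writes $\la x,x'\ra = (\rho N)^{-1}\sum_{i\in S\cap S'} R_i$ with i.i.d.\ Rademacher $R_i$, uses the sub-Gaussian bound $\Ex[(\sum R_i)^{2d}] \le (2d-1)!!\,|S\cap S'|^d$, and finally bounds $\Ex|S\cap S'|^d$ (where $|S\cap S'|$ is a sum of $N$ i.i.d.\ Bernoulli$(\rho^2)$ variables) via a moment inequality from \cite{LWB-sparse}. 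The resulting base of the geometric series is $(1+o(1))(1-\epsilon)^2\bigl(M/N + O(\sqrt{DM/s^2}\cdot\sqrt{M/N}) + O(DM/s^2)\bigr)$, which is strictly below $1$ because $(1-\epsilon)^2<1$, $M\le N$, and $DM/s^2=o(1)$.

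Your partition-expansion route is valid in principle, but the bounds you wrote down would not close. The crude count $(2d)^{2d}/r!$ overcounts perfect matchings ($r=d$) by a factor on the order of $(2e^2)^d$, so after combining with $c_{d,M}\lesssim (CM/d)^d$ your $r=d$ contribution becomes $(C'M/N)^d$ with $C'>1$; since the theorem allows $M/N$ arbitrarily close to $1$ and $\epsilon$ arbitrarily small (when $\delta\uparrow 1$), that series diverges. Relatedly, your claimed endgame $\sum_d (C'Md/s^2)^d$ only captures the $r=1$ extreme; the actual bound must also carry a term of size $((1-\epsilon)^2 M/N)^d$, and there the \emph{exact} constant $(1-\epsilon)^2<1$ is what makes the series converge---any $O(1)^d$ multiplicative slack is fatal here. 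To repair the partition approach you would need the sharp matching count $(2d-1)!!$ for $r=d$ and you must track $\beta^2=(1-\epsilon)^2$ explicitly rather than burying it in an unnamed constant; the paper's conditioning argument delivers these constants for free. (On the truncation event: the paper simply observes that replacing $x$ by $0$ on the bad event can only decrease $\Ex\la x,x'\ra^{2d}$, so one bounds by the untruncated prior directly; your intended treatment is also fine.)
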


\noindent The proof of Theorem \ref{ldlr-bdd} is deferred to Section \ref{sec:proofs-ldlr}. The condition $s \le M$ is not restrictive because we are interested in the regime $s \lesssim M/\log N$ so that a random matrix drawn from $\QQ$ is RIP. (Our proof actually still works without the assumption $s \le M$, provided we add the additional assumption $D = o(M)$.) Theorem \ref{ldlr-bdd} tells us that for a polynomial to distinguish $\PP$ from $\QQ$, it must have degree at least $\Omega(s^2/M)$. As discussed in Section~\ref{low-deg}, this suggests that runtime $N^{\tilde\Omega(s^2/M)}$ is required to strongly distinguish $\PP$ from $\QQ$ (at least with current techniques).

Together, the results in this section verify items (1)-(3) in Proposition~\ref{pca-rip} and thus constitute evidence that RIP certification requires time $N^{\tilde{\Omega}(s^2/M)}$.

\begin{remark}[Relation to planted sparse vector problem]
Note that the orthogonal complement of the row-span of our negatively-spiked Wishart matrix is a subspace of dimension $N-M$ that (approximately) contains a sparse vector. Thus Theorem~\ref{ldlr-bdd} suggests that when $s \ge\sqrt{N} \cdot \polylog(N)$, no polynomial-time algorithm can recover a planted $s$-sparse vector in a random subspace of dimension $\Theta(N)$ in $\RR^N$. This matches the upper bound of the $l^1/l^\infty$ relaxation in~\cite{SWW-sparse-dict,DH-sparse}. Another line of work on finding a sparse vector in a subspace~\cite{BKS-sos,QSW-sparse,sos-fast} considers the setting where $s = O(N)$ and the subspace has dimension $o(N)$, but our results do not apply to this regime.
\end{remark}

For convenience to the reader, we also present below the \textit{lazy algorithm} proposed  in~\cite{KZ-rip}, which gives the matching upper bound. In the following context, for $S\subseteq [N]$ we denote by $\bP_S := \sum_{i\in S}e_i e_i^\top$ the projector that zeros out all but the entries indexed by $S$ of a vector. 
\begin{proposition}
\label{prop:rip-equiv}
    For a matrix $X\in \RR^{M\times N}$ and $s\in \NN_+$, for any $\delta\in (0,1)$, $X$ satisfies the $(s,\delta)$-$\RIP$ if and only if 
    \[ B_s(X) \colonequals \max_{\substack{S \subseteq [N] \\ |S| = s}} \|\bP_S(X^{\top} X - I_N)\bP_S\| \le\delta.\]
\end{proposition}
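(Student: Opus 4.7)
The plan is to unwind both conditions into the same variational characterization over unit-norm $s$-sparse vectors. The RIP inequality $(1-\delta)\|x\|^2 \le \|Xx\|^2 \le (1+\delta)\|x\|^2$ is, after subtracting $\|x\|^2$ from all three sides and rewriting $\|Xx\|^2 - \|x\|^2 = x^\top(X^\top X - I_N)x$, equivalent to the single two-sided bound $|x^\top(X^\top X - I_N)x| \le \delta\|x\|^2$. Homogenizing, $X$ is $(s,\delta)$-$\RIP$ if and only if
\[
\max_{\substack{x \in \RR^N,\, \|x\|=1 \\ \|x\|_0 \le s}} \bigl|x^\top(X^\top X - I_N)x\bigr| \le \delta.
\]

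Next I would match this to $B_s(X)$ by the standard Rayleigh-quotient characterization of the operator norm for a symmetric matrix. For any subset $S \subseteq [N]$ with $|S|=s$, the matrix $\bP_S(X^\top X - I_N)\bP_S$ is symmetric, and for any $y \in \RR^N$ we have $y^\top\bP_S(X^\top X - I_N)\bP_S y = (\bP_S y)^\top(X^\top X - I_N)(\bP_S y)$, so the quadratic form depends on $y$ only through $\bP_S y$. Taking the maximum over $\|y\|=1$ is therefore attained at some $y$ with $\bP_{S^c} y = 0$ (otherwise rescaling $\bP_S y$ to unit norm only increases the absolute value), giving
\[
\|\bP_S(X^\top X - I_N)\bP_S\| \;=\; \max_{\substack{\|z\|=1 \\ \supp(z)\subseteq S}} \bigl|z^\top(X^\top X - I_N)z\bigr|.
\]

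Finally, taking the outer maximum over $S \subseteq [N]$ with $|S|=s$, every unit-norm $z$ with $\|z\|_0 \le s$ has $\supp(z)$ contained in some $S$ of size exactly $s$ (assuming $s \le N$, which is the case of interest), and conversely any such $z$ appears on the right-hand side. Thus
\[
B_s(X) \;=\; \max_{\substack{\|z\|=1 \\ \|z\|_0 \le s}} \bigl|z^\top(X^\top X - I_N)z\bigr|,
\]
and the condition $B_s(X) \le \delta$ is identical to the $\RIP$ reformulation obtained above. There is no real obstacle here; the only thing to be slightly careful about is ensuring that the operator norm of $\bP_S(X^\top X - I_N)\bP_S$, which is an $N\times N$ matrix, is computed by restricting attention to vectors supported on $S$, rather than accidentally picking up contributions from coordinates in $S^c$.
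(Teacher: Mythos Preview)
Your argument is correct and is exactly the direct unwinding of the definition that the paper has in mind; the paper simply asserts that the proposition ``follows directly from the definition of RIP'' without writing out the details. The only minor remark is that your caution about the operator norm of $\bP_S(X^\top X - I_N)\bP_S$ is well placed and your handling of it (reducing to vectors supported on $S$ via rescaling $\bP_S y$) is fine.
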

\noindent
Proposition~\ref{prop:rip-equiv} follows directly from the definition of RIP. A na\"ive way to certify RIP is to compute $B_s(X)$ by enumerating all $S \subseteq [N]$ with $|S| = s$, which would take time $N^{\Theta(s)}$. Instead, the following algorithm calculates $B_r(X)$ for $r = \tilde{\Theta}(s^2/M)$, thereby reducing the total runtime to $N^{\tilde{\Theta}(s^2/M)}$.

\noindent\begin{minipage}{\linewidth}
\begin{algorithm}[H]
\caption{~\cite{KZ-rip} \textit{Lazy algorithm:} Certification for the $(s,\delta)$-$\RIP$}
\label{algo:rip-cert}
\begin{algorithmic}[1]
\REQUIRE Input matrix $X\in \RR^{M\times N}$ with unit column vectors; parameters $r, s\in [N]$ with $1 < r \leq s$, and a number $\delta\in (0,1)$.
\STATE Compute $B_r(X)$.
\IF{$\frac{s-1}{r-1} B_r(X) \le \delta$}
\RETURN ``yes"
\ELSE
\RETURN ``no"
\ENDIF
\end{algorithmic}
\end{algorithm}
\end{minipage}
\vspace{5pt}

\noindent
It is proved in~\cite{KZ-rip} that, with a choice $r = \tilde{\Theta}\left(s^2/(\delta^2 M)\right)$, Algorithm~\ref{algo:rip-cert} certifies the $(s,\delta)$-$\RIP$ for $X = \frac{1}{\sqrt{M}}A$ with high probability, where $A$ is an $M\times N$ matrix with i.i.d.\ symmetric ($\pm 1$) Bernoulli random variables as its entries. We remark that with minor changes the same procedure works for $A$ with i.i.d.\ standard Gaussian random variables, and possibly also for many other ensembles. Based on our lower bound and the effectiveness of Algorithm~\ref{algo:rip-cert}, we conclude that the task of $(s,\delta)$-$\RIP$ certification requires time precisely $N^{\tilde{\Theta}(s^2/M)}$. However, our lower bound (Theorem~\ref{ldlr-bdd}) does not seem to easily generalize to other ensembles, since we have used gaussianity in a crucial way.

\begin{remark}[Relation to sparse PCA]
We remark that the RIP certification problem bears resemblance to sparse PCA, which is the positively-spiked ($\beta > 0$) case of the spiked Wishart model with a sparse spike prior. In~\cite{DKWB-spspca}, the authors investigated the precise runtime required to solve sparse PCA. This includes an algorithm that improves over the runtime of na\"ive exhaustive search by enumerating subsets of a particular size smaller than the true sparsity (similar to the lazy algorithm above and to the sparse PCA algorithm of~\cite{anytime-pca}), as well as a matching lower bound based on the low-degree likelihood ratio.
\end{remark}

\section{Proof for the Low-Degree Likelihood Ratio Bound}
\label{sec:proofs-ldlr}

In this section we prove Lemma \ref{P-notrip} and Theorem \ref{ldlr-bdd}. We start with introducing the following two Chernoff-type bounds for Bernoulli and $\chi^2$ sums.
 
\begin{lemma}
\label{chern-ber}
Suppose $x$ is taken from the sparse Rademacher prior $\sX_{\rho}^N$ per Definition \ref{sps-rad}. For any $\mu\in (0,1]$, we have 
\begin{equation}
\label{ber-conc}
    \Pr\left[\|x\|^2 > 1+\mu \right] \le \exp\left(-\frac{\mu^2\rho N}{3}\right), \quad \Pr\left[\|x\|^2 < 1-\mu \right] \le \exp\left(-\frac{\mu^2\rho N}{2}\right),
\end{equation}
and therefore
\begin{equation}
\label{ber-conc2}
    \Pr\left[1-\mu \le \|x\|^2 \le  1+\mu \right] \ge 1-2\exp\left(-\frac{\mu^2\rho N}{3}\right).
\end{equation}
\end{lemma}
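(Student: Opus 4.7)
The plan is to reduce both tail bounds to a standard multiplicative Chernoff inequality for a binomial random variable. Under the sparse Rademacher prior $\sX_N^\rho$ of Definition~\ref{sps-rad}, each coordinate satisfies $x_i^2 = 1/(\rho N)$ with probability $\rho$ and $x_i^2 = 0$ with probability $1-\rho$, independently across $i$. Define $B_i \colonequals \rho N \cdot x_i^2$, so that $B_i$ are i.i.d.\ Bernoulli$(\rho)$ random variables and $S \colonequals \sum_{i=1}^N B_i \sim \mathrm{Binomial}(N,\rho)$ with $\EE[S] = \rho N$. Then the events $\{\|x\|^2 > 1+\mu\}$ and $\{\|x\|^2 < 1-\mu\}$ are literally the events $\{S > (1+\mu)\EE[S]\}$ and $\{S < (1-\mu)\EE[S]\}$, respectively.

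First I would invoke the textbook multiplicative Chernoff bounds for sums of independent $[0,1]$-valued random variables: for $\mu \in (0,1]$,
\[ \Pr\!\left[S > (1+\mu)\EE[S]\right] \le \exp\!\left(-\tfrac{\mu^2 \EE[S]}{3}\right), \qquad \Pr\!\left[S < (1-\mu)\EE[S]\right] \le \exp\!\left(-\tfrac{\mu^2 \EE[S]}{2}\right). \]
Substituting $\EE[S] = \rho N$ yields~\eqref{ber-conc} immediately. The only thing to check is that the rescaling $B_i = \rho N \cdot x_i^2$ indeed produces $[0,1]$-valued summands, which it does by construction.

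For~\eqref{ber-conc2}, I would take a union bound over the two tail events:
\[ \Pr\!\left[\,1-\mu \le \|x\|^2 \le 1+\mu\,\right] \ge 1 - \exp\!\left(-\tfrac{\mu^2\rho N}{3}\right) - \exp\!\left(-\tfrac{\mu^2\rho N}{2}\right) \ge 1 - 2\exp\!\left(-\tfrac{\mu^2\rho N}{3}\right), \]
using $\exp(-\mu^2\rho N/2) \le \exp(-\mu^2\rho N/3)$ to collapse the two terms with the worse (larger) exponent. There is no genuine obstacle here; the lemma is essentially a bookkeeping exercise repackaging the Chernoff bound for a Binomial$(N,\rho)$ variable into statements about $\|x\|^2$. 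The only minor care point is keeping the normalization $1/\sqrt{\rho N}$ straight so that $x_i^2$ has the correct scale and the Bernoulli reformulation is exact.
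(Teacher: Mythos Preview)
Your proof is correct and essentially identical to the paper's: both observe that $\|x\|^2 = \|x\|_0/(\rho N)$ with $\|x\|_0 \sim \mathrm{Binomial}(N,\rho)$ (your $S$ is exactly $\|x\|_0$), then apply the standard multiplicative Chernoff bounds and a union bound.
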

\begin{proof}
Note that 
\[\Pr\left[\|x\|^2 > 1+\mu \right] = \Pr\left[\|x\|_0 > (1+\mu)\rho N\right],\quad \Pr\left[\|x\|^2 < 1-\mu \right] = \Pr\left[\|x\|_0 < (1-\mu)\rho N\right] \]
where $\|x\|_0$ is the sum of $N$ independent Bernoulli$(\rho)$ random variables, and $\EE\|x\|_0 = \rho N$. Therefore (\ref{ber-conc}) and (\ref{ber-conc2}) follow from the multiplicative Chernoff bound in  \cite{AV-chernoff}.
\end{proof}

\begin{lemma}[Chernoff bound for $\chi^2$ distribution]
\label{chern-chi2}
For all $\delta\in (0,1)$,
\begin{equation}
    \Pr\left[\frac{1}{M}\chi_M^2 \ge 1+\delta\right] \le \exp\left(-\frac{\delta^2 M}{12}\right).
\end{equation}
\end{lemma}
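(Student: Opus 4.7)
The plan is to prove this by the standard Chernoff (exponential Markov) bound, exploiting the fact that a $\chi^2_M$ random variable decomposes as a sum of $M$ i.i.d.\ squared standard normals, whose moment generating function is available in closed form.

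First I would write $\chi^2_M = \sum_{i=1}^M Z_i^2$ with $Z_i \sim \sN(0,1)$ i.i.d., and for any $t \in (0,1/2)$ apply the standard Chernoff bound
\begin{equation*}
\Pr\left[\sum_{i=1}^M Z_i^2 \ge (1+\delta)M\right] \le e^{-t(1+\delta)M}\, \EE\!\left[e^{t Z_1^2}\right]^M.
\end{equation*}
Using the classical identity $\EE[e^{tZ^2}] = (1-2t)^{-1/2}$ for $t < 1/2$, this reduces to bounding $\exp\bigl(-t(1+\delta)M - \tfrac{M}{2}\log(1-2t)\bigr)$.

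Next I would optimize the exponent over $t \in (0,1/2)$. Differentiating with respect to $t$ yields the optimizer $t^\star = \tfrac{\delta}{2(1+\delta)}$, which indeed lies in $(0,1/2)$, and substituting back produces the clean bound
\begin{equation*}
\Pr\left[\tfrac{1}{M}\chi^2_M \ge 1+\delta\right] \le \exp\!\left(\tfrac{M}{2}\bigl[\log(1+\delta) - \delta\bigr]\right).
\end{equation*}

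The final step is to upgrade this to the stated $\exp(-\delta^2 M/12)$ bound by checking the elementary inequality $\log(1+\delta) - \delta \le -\delta^2/6$ for $\delta \in (0,1)$. This follows by setting $f(\delta) = \log(1+\delta) - \delta + \delta^2/6$, noting $f(0)=0$, and computing $f'(\delta) = \delta(\delta-2)/(3(1+\delta)) < 0$ on $(0,1)$, so $f \le 0$ throughout. I do not foresee any serious obstacle; this is a textbook argument, and the only mild subtlety is ensuring the Taylor-type inequality above is tight enough to recover the constant $12$ in the exponent (rather than a worse constant), which the explicit monotonicity check handles cleanly.
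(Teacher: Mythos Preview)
Your proposal is correct and follows essentially the same route as the paper: both arrive at the intermediate bound $\Pr[\chi^2_M \ge (1+\delta)M] \le \exp\bigl(\tfrac{M}{2}[\log(1+\delta)-\delta]\bigr)$ and then invoke the elementary inequality $\log(1+\delta)-\delta \le -\delta^2/6$ on $(0,1)$. The only difference is that the paper cites the first step from the literature and asserts the second without proof, whereas you carry out the MGF optimization and the derivative check explicitly; your version is simply a more self-contained rendering of the same argument.
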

\begin{proof}
By the classical Chernoff bound (see, for example \cite{LM-chernoff}),
\[\frac{1}{M}\Pr\left[\chi_M^2 \ge (1+\delta)M\right] \le \frac{1}{2}(-\delta +\log(1+\delta)).\]
Now the lemma follows immediately from the observation that for $\delta\in (0,1)$,
\[\frac{1}{2}(-\delta+\log(1+\delta)) \le -\frac{\delta^2}{12}.\]
\end{proof}

\begin{proof}[Proof of Lemma \ref{P-notrip}]
Note that
\[ \left\|\frac{1}{\sqrt{M}}Ax\right\|^2 = \frac{1}{M}\sum_{i = 1}^M (u_i^\top x)^2 \sim \frac{1}{M}\left(\|x\|^2-(1-\epsilon)\|x\|^4\right)\chi_M^2. \]
For $x$ taken from $\sX_N^\rho$ satisfying $1-\epsilon \le \|x\|^2\le 1+\epsilon$, since $-(1-\epsilon)\|x\|^2 \ge -(1-\epsilon^2) > -1$, under $\PP$ each row of the observation $A$ is taken from $\sN(0,I-(1-\epsilon)xx^\top)$. Furthermore, we deduce from $\|x\|^2\le 1+\epsilon$ that $\|x\|_0 \le (1+\epsilon)\rho_N N \le s$. Now Lemma \ref{chern-chi2} gives
\begin{align*}
    \Pr\left[\left\|\frac{1}{\sqrt{M}}Ax\right\|^2 \ge (1-\delta)\|x\|^2 \right] 
    &= \Pr\left[\frac{1}{M}\chi_M^2 \ge \frac{1-\delta}{1-(1-\epsilon)\|x\|^2} \right]\\
    &\le \Pr\left[\frac{1}{M}\chi_M^2 \ge 1+\delta \right]\\
    &\le \exp\left(-\frac{\delta^2 M}{12}\right).
\end{align*}
The first inequality used the fact that
\[\frac{1-\delta}{1-(1-\epsilon)\|x\|^2} \ge \frac{1-\delta}{1-(1-\epsilon)^2} 
\ge \frac{1-\delta}{2\epsilon} = 1+\delta.\] 
Therefore we know from Lemma \ref{chern-ber} that
\begin{align*}
    &\Pr\left[\frac{1}{\sqrt{M}}A\ \text{is}\ (s,\delta)\text{-}\RIP\right]\\
    \le & \Pr\left[1-\epsilon\le \|x\|^2 \le 1+\epsilon,\ \left\|\frac{1}{\sqrt{M}}Ax\right\|^2 \ge (1-\delta)\|x\|^2 \right] +\Pr\left[\|x\|^2 < 1-\epsilon\right] + \Pr\left[\|x\|^2 > 1+\epsilon\right]\\
    \le & \exp\left(-\frac{\delta^2 M}{12}\right) + 2\exp\left(-\frac{(1-\delta)^2 s}{24}\right)
\end{align*}
the rightmost sum being $o(1)$ given $M\rightarrow\infty$ and $s\rightarrow\infty$ as $N\rightarrow\infty$.
\end{proof}

\begin{proof}[Proof of Theorem \ref{ldlr-bdd}]
Let $L_{N,M,\beta,\sX}^{\le D}$ denote the degree-$D$ likelihood ratio for the spiked Wishart model (see Definition \ref{def-wish}) with parameters $N,M,\beta$ and spike prior $\sX$. \cite{BKW-sk} gives the formula
\begin{align}\label{ldlr_wsh}
\|L_{N,M,\beta,\sX}^{\le D}\|_{2}^2 &= \Ex_{v^{(1)},v^{(2)}\sim \sX_N}\left[\varphi_{M,\lfloor D/2 \rfloor}\left(\frac{\beta^2 \la v^{(1)},v^{(2)} \ra^2}{4}\right)\right]\nonumber\\
&= \Ex_{v^{(1)},v^{(2)}\sim \sX_N}\sum_{d = 0}^{\lfloor D/2 \rfloor}\left(\sum_{\substack{d_1,\dots,d_M\\ \sum d_i = d}}\prod_{i = 1}^M\binom{2d_i}{d_i}\right)\left(\frac{\beta^2\la v^{(1)},v^{(2)}\ra^2}{4}\right)^d,
\end{align}
where $v^{(1)},v^{(2)}$ are drawn independently from $\sX_N$. Here $\varphi_{N,k}(x)$ is the Taylor series of $\varphi_N$ around $x = 0$ truncated to degree $k$, i.e.
\begin{align*}
    \varphi_M(x) &\colonequals (1-4x)^{-M/2} \\
    \varphi_{M,k}(x) &\colonequals \sum_{d = 0}^k x^d \frac{1}{d!}\prod_{a = 0}^{d - 1}\left(2M + 4a\right)
\end{align*}
by the the generalized binomial theorem, where the coefficient of $x^d$ may be written in terms of a generalized binomial coefficient as $(-4)^d \binom{-M / 2}{d}$.\footnote{We note that this expansion of the power series, which allows a simplified analysis, was not noticed in \cite{BKW-sk}.}
Note that under the setting of Problem \ref{pca-rip}, we are essentially dealing with $\sX_N$ a truncated version of the sparse Rademacher prior $\sX_N^{\rho_N}$: $x\sim \sX_N$ is taken as following.
\begin{itemize}
    \item[(1)] Draw $x\sim \sX_N^{\rho_N}$.
    \item[(2)] If $-(1-\epsilon)\|x\|^2 < -1$, then set $x = 0$.
\end{itemize}
Therefore for any non-negative integer $d$ it holds that
\begin{equation}
    \Ex_{v^{(1)},v^{(2)}\sim \sX_N} \la v^{(1)},v^{(2)}\ra^{2d} \le \Ex_{v^{(1)},v^{(2)}\sim \sX_N^{\rho_N}} \la v^{(1)},v^{(2)}\ra^{2d}.
\end{equation}
For any $d \leq D$, the power series coefficients above are bounded by
\begin{equation}
    0 \leq \frac{1}{d!}\prod_{a = 0}^{d - 1}\left(2M + 4a\right) \leq \frac{(2M + 4D)^d}{d!}
\end{equation}
For independent choices of $v^{(1)}, v^{(2)} \sim \sX_N^{\rho_N}$, denote $S^{(1)}$ and $S^{(2)}$ their respective support. Observe that
\[\la v^{(1)},v^{(2)}\ra|S^{(1)}, S^{(2)} \stackrel{d}{=} \frac{1}{\rho N}\sum_{i\in S^{(1)}\cap S^{(2)}}R_i\]
where $R_i$ are i.i.d.\ Rademacher random variables. Following Section 4.2 of~\cite{LWB-sparse}, we get
\begin{align}
    \Ex_{v^{(1)},v^{(2)}\sim \sX_N^{\rho_N}} \la v^{(1)},v^{(2)}\ra^{2d}
    &= \frac{1}{(\rho N)^{2d}}\Ex_{S^{(1)}, S^{(2)}} \left[\Ex \left(\sum_{i\in S^{(1)}\cap S^{(2)}} R_i\right)^{2d} \, \bigg| \,  S^{(1)}, S^{(2)} \right]\nonumber\\
    &\le \frac{1}{(\rho N)^{2d}} (2d-1)!! \Ex_{S^{(1)}, S^{(2)}}|S^{(1)}\cap S^{(2)}|^d\nonumber \\
    &= \frac{1}{(\rho N)^{2d}} (2d-1)!!\cdot \Ex \left(\sum_{i = 1}^N B_i\right)^d \label{eq:bern-for-cap}\\
    &\le \frac{1}{(\rho N)^{2d}} (2d-1)!!
    \left[\rho^2 N + 2^{1/d} \left(d^{d/2}4^{d-1}(\rho^2 N)^{d/2} + \left(\frac{4}{3}d\right)^d\right)^{1/d}\right]^d\nonumber\\
    & \le \frac{1}{(\rho N)^{2d}} (2d-1)!!
    \left(\rho^2 N + 4\rho \sqrt{dN} + 3d\right)^d \nonumber
\end{align}
where, in~\eqref{eq:bern-for-cap}, $B_i = \one\{i\in S^{(1)}\cap S^{(2)}\}$ are i.i.d.\ Bernoulli random variables with success probability $\rho^2$. Overall, we obtain that
\begin{align}
    \|L_{N,M,\beta,\sX}^{\le D}\|_{2}^2 
    &\lesssim \sum_{d = 0}^{\lfloor D/2 \rfloor} \left(\frac{\beta^2 (M + 2D)}{
    \rho^2 N^2}\right)^d \left(\rho^2 N + 4\rho \sqrt{dN} + 3d\right)^d \nonumber\\
    &\le \sum_{d = 0}^{\lfloor D/2 \rfloor} \left(\frac{\beta^2 (M + 2D)}{
    \rho^2 N^2}\right)^d \left(\rho^2 N + 4\rho \sqrt{DN/2} + \frac{3D}{2}\right)^d \nonumber\\
    & \le \sum_{d = 0}^{\lfloor D/2 \rfloor} \left(\beta^2 \left(\frac{M + 2D}{N}+\frac{4\sqrt{2D}(M + 2D)}{s\sqrt{N}} + \frac{6D(M + 2D)}{s^2}\right) \right)^d\nonumber\\
    & = \sum_{d = 0}^{\lfloor D/2 \rfloor} \left((1+o(1))(1-\epsilon)^2 \left(\frac{M}{N}+4\sqrt{2} \sqrt{\frac{DM}{s^2}} \sqrt{\frac{M}{N}} + 6 \frac{DM}{s^2}\right) \right)^d\nonumber\\
    & = O(1) \nonumber
\end{align}
where we have used $\beta = -(1-\epsilon)$ and $M \le N$ along with the assumptions from Theorem~\ref{ldlr-bdd}: $s \le M$ and $D = o(s^2/M)$, which together imply $D = o(M)$.
\end{proof}

\section*{Acknowledgments}

We thank Cheng Mao for helpful comments on an earlier version.

\addcontentsline{toc}{section}{References}
\bibliographystyle{alpha}
\bibliography{main}

\newcommand{\etalchar}[1]{$^{#1}$}
\begin{thebibliography}{BFMW13}

\bibitem[AHSC09]{applebaum-chirp}
Lorne Applebaum, Stephen~D Howard, Stephen Searle, and Robert Calderbank.
\newblock Chirp sensing codes: Deterministic compressed sensing measurements
  for fast recovery.
\newblock {\em Applied and Computational Harmonic Analysis}, 26(2):283--290,
  2009.

\bibitem[AV79]{AV-chernoff}
Dana Angluin and Leslie~G Valiant.
\newblock Fast probabilistic algorithms for hamiltonian circuits and matchings.
\newblock {\em Journal of Computer and system Sciences}, 18(2):155--193, 1979.

\bibitem[BBH{\etalchar{+}}20]{ld-sq}
Matthew Brennan, Guy Bresler, Samuel~B Hopkins, Jerry Li, and Tselil Schramm.
\newblock Statistical query algorithms and low-degree tests are almost
  equivalent.
\newblock {\em arXiv preprint arXiv:2009.06107}, 2020.

\bibitem[BDDW08]{baraniuk-proof}
Richard Baraniuk, Mark Davenport, Ronald DeVore, and Michael Wakin.
\newblock A simple proof of the restricted isometry property for random
  matrices.
\newblock {\em Constructive Approximation}, 28(3):253--263, 2008.

\bibitem[BDF{\etalchar{+}}11]{bourgain}
Jean Bourgain, Stephen Dilworth, Kevin Ford, Sergei Konyagin, and Denka
  Kutzarova.
\newblock Explicit constructions of {RIP} matrices and related problems.
\newblock {\em Duke Mathematical Journal}, 159(1):145--185, 2011.

\bibitem[BDMS13]{BDMS-hard}
Afonso~S Bandeira, Edgar Dobriban, Dustin~G Mixon, and William~F Sawin.
\newblock Certifying the restricted isometry property is hard.
\newblock {\em IEEE transactions on information theory}, 59(6):3448--3450,
  2013.

\bibitem[BFMM16]{BFMM-derandomizing}
Afonso~S Bandeira, Matthew Fickus, Dustin~G Mixon, and Joel Moreira.
\newblock Derandomizing restricted isometries via the legendre symbol.
\newblock {\em Constructive Approximation}, 43(3):409--424, 2016.

\bibitem[BFMW13]{BFMW-deterministic}
Afonso~S Bandeira, Matthew Fickus, Dustin~G Mixon, and Percy Wong.
\newblock The road to deterministic matrices with the restricted isometry
  property.
\newblock {\em Journal of Fourier Analysis and Applications}, 19(6):1123--1149,
  2013.

\bibitem[BHK{\etalchar{+}}19]{BHK-planted-clique}
Boaz Barak, Samuel Hopkins, Jonathan Kelner, Pravesh~K Kothari, Ankur Moitra,
  and Aaron Potechin.
\newblock A nearly tight sum-of-squares lower bound for the planted clique
  problem.
\newblock {\em SIAM Journal on Computing}, 48(2):687--735, 2019.

\bibitem[BKS14]{BKS-sos}
Boaz Barak, Jonathan~A Kelner, and David Steurer.
\newblock Rounding sum-of-squares relaxations.
\newblock In {\em Proceedings of the forty-sixth annual ACM symposium on Theory
  of computing}, pages 31--40, 2014.

\bibitem[BKW19]{BKW-sk}
Afonso~S Bandeira, Dmitriy Kunisky, and Alexander~S Wein.
\newblock Computational hardness of certifying bounds on constrained {PCA}
  problems.
\newblock {\em arXiv preprint arXiv:1902.07324}, 2019.

\bibitem[BMM17]{BMM-conditional}
Afonso~S Bandeira, Dustin~G Mixon, and Joel Moreira.
\newblock A conditional construction of restricted isometries.
\newblock {\em International Mathematics Research Notices}, 2017(2):372--381,
  2017.

\bibitem[Can08]{candes-rip}
Emmanuel~J Cand\`{e}s.
\newblock The restricted isometry property and its implications for compressed
  sensing.
\newblock {\em Comptes rendus mathematique}, 346(9-10):589--592, 2008.

\bibitem[CT05]{CT-decoding}
Emmanuel~J Cand\`{e}s and Terence Tao.
\newblock Decoding by linear programming.
\newblock {\em IEEE Transactions on information theory}, 51(12):4203, 2005.

\bibitem[CZ13a]{CZ-recovery}
T~Tony Cai and Anru Zhang.
\newblock Sharp {RIP} bound for sparse signal and low-rank matrix recovery.
\newblock {\em Applied and Computational Harmonic Analysis}, 35(1):74--93,
  2013.

\bibitem[CZ13b]{CZ-recovery2}
T~Tony Cai and Anru Zhang.
\newblock Sparse representation of a polytope and recovery of sparse signals
  and low-rank matrices.
\newblock {\em IEEE Transactions on Information Theory}, 60(1):122--132, 2013.

\bibitem[DeV07]{devore-deterministic}
Ronald~A DeVore.
\newblock Deterministic constructions of compressed sensing matrices.
\newblock {\em Journal of complexity}, 23(4):918--925, 2007.

\bibitem[DH14]{DH-sparse}
Laurent Demanet and Paul Hand.
\newblock Scaling law for recovering the sparsest element in a subspace.
\newblock {\em Information and Inference: A Journal of the IMA}, 3(4):295--309,
  2014.

\bibitem[DKWB19]{DKWB-spspca}
Yunzi Ding, Dmitriy Kunisky, Alexander~S Wein, and Afonso~S Bandeira.
\newblock Subexponential-time algorithms for sparse {PCA}.
\newblock {\em arXiv preprint arXiv:1907.11635}, 2019.

\bibitem[Don06]{donoho-comp-sensing}
David~L Donoho.
\newblock Compressed sensing.
\newblock {\em IEEE Transactions on information theory}, 52(4):1289--1306,
  2006.

\bibitem[FMT12]{FMT-steiner-etf}
Matthew Fickus, Dustin~G Mixon, and Janet~C Tremain.
\newblock Steiner equiangular tight frames.
\newblock {\em Linear algebra and its applications}, 436(5):1014--1027, 2012.

\bibitem[FR13]{foucart-cs}
Simon Foucart and Holger Rauhut.
\newblock {\em A Mathematical Introduction to Compressive Sensing}.
\newblock Springer, 2013.

\bibitem[Gam18]{gamarnik-ramsey-hard}
David Gamarnik.
\newblock Explicit construction of {RIP} matrices is {Ramsey}-hard.
\newblock {\em arXiv preprint arXiv:1805.11238}, 2018.

\bibitem[HKP{\etalchar{+}}17]{HKP-sos}
Samuel~B Hopkins, Pravesh~K Kothari, Aaron Potechin, Prasad Raghavendra, Tselil
  Schramm, and David Steurer.
\newblock The power of sum-of-squares for detecting hidden structures.
\newblock In {\em 2017 IEEE 58th Annual Symposium on Foundations of Computer
  Science (FOCS)}, pages 720--731. IEEE, 2017.

\bibitem[Hop18]{hopkins-thesis}
Samuel~B Hopkins.
\newblock {\em Statistical Inference and the Sum of Squares Method}.
\newblock PhD thesis, Cornell University, 2018.

\bibitem[HS17]{HS-bayesian}
Samuel~B Hopkins and David Steurer.
\newblock Bayesian estimation from few samples: community detection and related
  problems.
\newblock {\em arXiv preprint arXiv:1710.00264}, 2017.

\bibitem[HSSS16]{sos-fast}
Samuel~B Hopkins, Tselil Schramm, Jonathan Shi, and David Steurer.
\newblock Fast spectral algorithms from sum-of-squares proofs: tensor
  decomposition and planted sparse vectors.
\newblock In {\em Proceedings of the forty-eighth annual ACM symposium on
  Theory of Computing}, pages 178--191, 2016.

\bibitem[HSV19]{anytime-pca}
Guy Holtzman, Adam Soffer, and Dan Vilenchik.
\newblock A greedy anytime algorithm for sparse {PCA}.
\newblock {\em arXiv preprint arXiv:1910.06846}, 2019.

\bibitem[KWB19]{KWB-notes}
Dmitriy Kunisky, Alexander~S Wein, and Afonso~S Bandeira.
\newblock Notes on computational hardness of hypothesis testing: Predictions
  using the low-degree likelihood ratio.
\newblock {\em arXiv preprint arXiv:1907.11636}, 2019.

\bibitem[KZ14]{KZ-rip}
Pascal Koiran and Anastasios Zouzias.
\newblock Hidden cliques and the certification of the restricted isometry
  property.
\newblock {\em IEEE transactions on information theory}, 60(8):4999--5006,
  2014.

\bibitem[LM00]{LM-chernoff}
Beatrice Laurent and Pascal Massart.
\newblock Adaptive estimation of a quadratic functional by model selection.
\newblock {\em Annals of Statistics}, pages 1302--1338, 2000.

\bibitem[LWB20]{LWB-sparse}
Matthias L{\"o}ffler, Alexander~S Wein, and Afonso~S Bandeira.
\newblock Computationally efficient sparse clustering.
\newblock {\em arXiv preprint arXiv:2005.10817}, 2020.

\bibitem[Mix15]{mixon-explicit}
Dustin~G Mixon.
\newblock Explicit matrices with the restricted isometry property: Breaking the
  square-root bottleneck.
\newblock In {\em Compressed sensing and its applications}, pages 389--417.
  Springer, 2015.

\bibitem[NW14]{NW-sse}
Abhiram Natarajan and Yi~Wu.
\newblock Computational complexity of certifying restricted isometry property.
\newblock {\em arXiv preprint arXiv:1406.5791}, 2014.

\bibitem[QSW14]{QSW-sparse}
Qing Qu, Ju~Sun, and John Wright.
\newblock Finding a sparse vector in a subspace: Linear sparsity using
  alternating directions.
\newblock In {\em Advances in Neural Information Processing Systems}, pages
  3401--3409, 2014.

\bibitem[SWW12]{SWW-sparse-dict}
Daniel~A Spielman, Huan Wang, and John Wright.
\newblock Exact recovery of sparsely-used dictionaries.
\newblock In {\em Conference on Learning Theory}, pages 37--1, 2012.

\bibitem[TP13]{TP-complexity}
Andreas~M Tillmann and Marc~E Pfetsch.
\newblock The computational complexity of the restricted isometry property, the
  nullspace property, and related concepts in compressed sensing.
\newblock {\em IEEE Transactions on Information Theory}, 60(2):1248--1259,
  2013.

\bibitem[WBP16]{WBP-avg-hard}
Tengyao Wang, Quentin Berthet, and Yaniv Plan.
\newblock Average-case hardness of {RIP} certification.
\newblock In {\em Advances in Neural Information Processing Systems}, pages
  3819--3827, 2016.

\bibitem[Wee17]{weed-approx-hard}
Jonathan Weed.
\newblock Approximately certifying the restricted isometry property is hard.
\newblock {\em IEEE Transactions on Information Theory}, 64(8):5488--5497,
  2017.

\end{thebibliography}

\end{document}